\title{\bfseries On A Divisor Sum Involving Pairwise Relatively Prime Positive Integers}
\author{\itshape Masum Billal}
\newtheorem{theorem}{\textsc{Theorem}}
\newtheorem{lemma}{\textsc{Lemma}}
\theoremstyle{definition}
\newtheorem*{remark}{Remark}
\begin{document}
	\maketitle
		\begin{abstract}
			The number of tuples with positive integers pairwise relatively prime to each other with product at most $n$ is considered. A generalization of $\mu^{2}$ where $\mu$ is the M\"{o}bius function is used to formulate this divisor sum and establish some identities. One such identity is a weighted sum of reciprocal of square-free numbers not exceeding $n$. Some auxiliary number theoretic functions are introduced to formulate this sum.
		\end{abstract}
	The generalized divisor function $\tau_{k}(n)$ and its summation function $D_{k}(n)$ are often of high interest in the literature of number theory.
	We can write $D_{k}(n)$ as
		\begin{align*}
			D_{k}(n)
				& = \sum_{\substack{a_{1}\cdots a_{k}\leq n\\1\leq a_{1},\ldots,a_{k}\leq n}}1
		\end{align*}
	The purpose of this paper is to look at a similar function
		\begin{align*}
			\mathcal{T}_{k}(n)
				& = \sum_{\substack{a_{1}\cdots a_{k}\leq n\\1\leq a_{1},\ldots, a_{k}\leq n\\\gcd(a_{i},a_{j})=1,i\neq j}}1
		\end{align*}
	In words, $\mathcal{T}_{k}(n)$ is the number of ordered tuples of pairwise relatively prime positive integers $(a_{1},\ldots,a_{k})$ with product at most $n$. Functions of the exact form of $\mathcal{T}$ have been rarely considered in the literature. Although, similar functions have been discussed by various authors. \textcite{lehmer1900} seems to be the first one to study of functions of the form
		\begin{align*}
			\sum_{\substack{1\leq a_{1},\ldots, a_{k}\leq n\\\gcd(a_{1},\ldots,a_{k})=1}}1
		\end{align*}
	\textcite{nymann1972,nymann1975} also considers functions of this form. This kind of functions are generalizations of Euler's Totient function as can be seen in \textcite{shonhiwa1999,toth2002}. However, $\mathcal{T}$ differs from such functions fundamentally due to the additional restriction that $a_{1}\cdots a_{k}\leq n$. The closest consideration to $\mathcal{T}$ seems to be done by \textcite{toth2002} where he looked at the sum without the restriction on product. Although \textcite{toth2002} achieved some nice results, they do not help much with the restriction we are considering. We will not consider the following generalization in this paper.
		\begin{align*}
			\mathcal{T}_{k,r}(n)
				& = \sum_{\substack{a_{1}\cdots a_{k}\leq n\\1\leq a_{1},\ldots,a_{k}\leq n\\\gcd(a_{i},a_{j})=1,i\neq j\\\gcd(a_{i},r)=1}}1
		\end{align*}
	We will establish some identities related to $\mathcal{T}$. $\mathcal{T}$ possesses some probabilistic meaning similar to \textcite{benkoski1976,toth2002,nymann1972} in the sense that the probability that $k$ positive integers not exceeding $n$ are pairwise relatively prime and that their product also does not exceed $n$ is $\mathcal{T}_{k}/n^{k}$. Dealing with $\mathcal{T}$ is not completely trivial. For example, one would expect that the generalized convolution and inversion might be a good start but a little manipulation shows that it does not help us with the sum $\mathcal{T}$. Rather we can find a formula for $\mathcal{T}$ if we write
		\begin{align*}
			\mathcal{T}_{k}(n)
				& = \sum_{\substack{a_{1}\cdots a_{k}\leq n\\1\leq a_{1}\ldots\leq a_{k}\leq n\\\gcd(a_{i},a_{j})=1,i\neq j}}1\\
				& = \sum_{\substack{a_{1}\cdots a_{k}\leq n-1\\1\leq a_{1}\ldots\leq a_{k}\leq n\\\gcd(a_{i},a_{j})=1,i\neq j}}1+\sum_{\substack{a_{1}\cdots a_{k}= n\\1\leq a_{1}\ldots\leq a_{k}\leq n\\\gcd(a_{i},a_{j})=1,i\neq j}}1\\
				& = \sum_{\substack{a_{1}\cdots a_{k}\leq n-1\\1\leq a_{1}\ldots\leq a_{k}\leq n\\\gcd(a_{i},a_{j})=1,i\neq j}}1+\rho_{k}(n)
		\end{align*}
	Here, $\rho_{k}(n)$ is the number of ways to write $n$ as a product of $k$ pairwise relatively prime positive integers. Fortunately, we have the beautiful result\footnote{It may not be well known to everyone, but we can induct on $\omega(n)$ to prove it. So we leave the proof as an exercise to the reader unfamiliar with this beautiful result.} that $\rho_{k}(n)=k^{\omega(n)}$ where $\omega(n)$ is the number of distinct prime divisors of $n$. We have the following.
		\begin{theorem}\label{thm:omegasum}
			\begin{align}
				\mathcal{T}_{k}(n)
					& = \sum_{i=1}^{n}k^{\omega(n)}
			\end{align}
		\end{theorem}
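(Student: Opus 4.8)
The plan is to prove the identity by induction on $n$, taking the decomposition displayed just before the statement as the engine. First I would settle the base case $n=1$: the only admissible tuple is $(1,\ldots,1)$, so $\mathcal{T}_k(1)=1$, which agrees with $k^{\omega(1)}=k^{0}=1$ since $\omega(1)=0$. (Throughout I read the count as ranging over \emph{ordered} tuples, which is the convention under which $\rho_k(n)=k^{\omega(n)}$ counts correctly.)

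For the inductive step I would start from the recurrence already obtained above, namely
\[
	\mathcal{T}_{k}(n)=\sum_{\substack{a_{1}\cdots a_{k}\leq n-1\\1\leq a_{1},\ldots,a_{k}\leq n\\\gcd(a_{i},a_{j})=1,\,i\neq j}}1+\rho_{k}(n),
\]
and I would identify the two summands. For the first, I would observe that the side condition $a_{i}\leq n$ is redundant: since every $a_{i}\geq 1$, we have $a_{i}\leq a_{1}\cdots a_{k}\leq n-1$, so the bound may be tightened to $a_{i}\leq n-1$ without altering the count, and the sum becomes \emph{verbatim} the definition of $\mathcal{T}_{k}(n-1)$. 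For the second, the stated result gives $\rho_{k}(n)=k^{\omega(n)}$. These two facts yield the one-step recurrence
\[
	\mathcal{T}_{k}(n)=\mathcal{T}_{k}(n-1)+k^{\omega(n)}.
\]
Invoking the inductive hypothesis $\mathcal{T}_{k}(n-1)=\sum_{i=1}^{n-1}k^{\omega(i)}$ and telescoping then gives $\mathcal{T}_{k}(n)=\sum_{i=1}^{n}k^{\omega(i)}$, which is the claimed formula.

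I do not expect the induction itself to present any difficulty; once the recurrence is in hand the argument is purely mechanical. The genuine content is pushed into the identity $\rho_{k}(n)=k^{\omega(n)}$, which the excerpt relegates to a footnote. If one wanted a self-contained proof, that identity is where the care is needed: because the factors $a_{i}$ are pairwise coprime, each maximal prime power $p^{v_{p}(n)}$ dividing $n$ must be assigned in its entirety to a single coordinate, so there are $k$ independent choices for each of the $\omega(n)$ distinct primes, producing $k^{\omega(n)}$ ordered tuples. I would make this rigorous by induction on $\omega(n)$, peeling off one prime power at a time, and I expect this to be the only step requiring real attention.
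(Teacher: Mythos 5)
Your proof is correct and follows essentially the same route as the paper: the decomposition of $\mathcal{T}_k(n)$ into the count with product at most $n-1$ plus $\rho_k(n)=k^{\omega(n)}$, telescoped over $n$, is exactly the argument the paper intends (with the identity $\rho_k(n)=k^{\omega(n)}$ likewise deferred to an induction on $\omega(n)$ in a footnote). Your explicit observations that the constraint $a_i\leq n$ is redundant and that the tuples must be read as ordered for $\rho_k(n)=k^{\omega(n)}$ to count correctly are welcome clarifications of points the paper leaves implicit.
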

	Although \autoref{thm:omegasum} gives us an exact formula, we will look for an alternative formulation of $\mathcal{T}$.
		\begin{lemma}\label{lem:summation}
			Let $f$ be a multiplicative arithmetic function and the summation function of $f$ be $F(x)=\sum_{n\leq x}f(n)$. Then there exists a computable unique multiplicative arithmetic function $g$ such that
				\begin{align*}
					F(x)
						& = \sum_{n\leq x}\left\lfloor{\dfrac{x}{n}}\right\rfloor g(n)
				\end{align*}
		\end{lemma}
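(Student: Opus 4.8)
The plan is to recognize the right-hand side as the summation function of a Dirichlet convolution and then invert. First I would use the elementary counting identity $\lfloor x/n\rfloor=\#\{m\geq 1:mn\leq x\}$ to rewrite
\[
	\sum_{n\leq x}\left\lfloor\frac{x}{n}\right\rfloor g(n)
		=\sum_{n\leq x}\ \sum_{\substack{m\geq 1\\ mn\leq x}}g(n)
		=\sum_{N\leq x}\ \sum_{d\mid N}g(d),
\]
where in the last step I collect the pairs $(n,m)$ according to their product $N=mn$, so that $n$ runs over the divisors $d$ of $N$. Writing $\mathbf{1}$ for the constant function equal to $1$ and $*$ for Dirichlet convolution, the inner sum is $(g*\mathbf{1})(N)$, so the right-hand side becomes $\sum_{N\leq x}(g*\mathbf{1})(N)$. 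This rearrangement is legitimate because every sum involved is finite.

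Next I would compare this with the target $F(x)=\sum_{N\leq x}f(N)$. The desired identity $\sum_{N\leq x}f(N)=\sum_{N\leq x}(g*\mathbf{1})(N)$ is required to hold for all $x$; evaluating at $x=N$ and at $x=N-1$ and subtracting (equivalently, matching the jumps of these step functions at each positive integer) shows that it is equivalent to the pointwise condition $f(N)=(g*\mathbf{1})(N)$ for every positive integer $N$. Thus the whole problem reduces to solving the single convolution equation $f=g*\mathbf{1}$ for the unknown function $g$.

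This equation is solved by M\"{o}bius inversion: taking $g=f*\mu$ gives the explicit, hence computable, formula $g(n)=\sum_{d\mid n}f(d)\mu(n/d)$, which satisfies $g*\mathbf{1}=f*\mu*\mathbf{1}=f$ since $\mu*\mathbf{1}$ is the identity of the convolution ring. Uniqueness follows because $\mathbf{1}$ is invertible under $*$ with inverse $\mu$, so $g*\mathbf{1}=g'*\mathbf{1}$ forces $g=g'$. Finally, since $f$ is multiplicative and $\mu$ is multiplicative, and the Dirichlet convolution of two multiplicative functions is again multiplicative, the function $g=f*\mu$ is multiplicative, as claimed.

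The only genuine subtlety, and the step I expect to be the main obstacle, is the reduction in the second paragraph: one must argue carefully that an identity between summation functions holding for \emph{all real} $x$ is equivalent to the term-by-term identity $f=g*\mathbf{1}$ at the integers. Once this is secured, everything else is the standard inversion machinery, and the existence, uniqueness, computability, and multiplicativity of $g$ all fall out at once.
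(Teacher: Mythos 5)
Your proposal is correct and follows essentially the same route as the paper: both define $g=\mu\ast f$ via M\"{o}bius inversion and identify the weighted floor sum with $\sum_{N\leq x}\sum_{d\mid N}g(d)$ by interchanging the order of summation. The only difference is cosmetic --- you work from the right-hand side backwards and justify uniqueness by differencing the step functions at integers (slightly more careful than the paper, which only notes uniqueness of the solution to $f=g\ast\mathbf{1}$) --- but the key identity and the inversion machinery are identical.
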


		\begin{proof}
			Let $g$ be an arithmetic function such that
				\begin{align*}
					f(n)
						& = \sum_{d\mid n}g(d)
				\end{align*}
			It is easy to see from this representation that $g$ is unique and $g$ can be computed using the recursion $g(1)=f(1)$ and
				\begin{align*}
					g(n)
						& = f(n)-\sum_{\substack{d\mid n\\d<n}}g(d)
				\end{align*}
			If $\mu$ is the M\"{o}bius function, $g=\mu\ast f$ and since $\mu$ and $f$ both are multiplicative, their convolution is multiplicative as well. Then we get
				\begin{align*}
					F(x)
						& = \sum_{n\leq x}f(n)\\
						& = \sum_{n\leq x}\sum_{d\mid n}g(d)\\
						& = \sum_{n\leq x}\left\lfloor{\dfrac{x}{n}}\right\rfloor g(n)
				\end{align*}
		\end{proof}
	\autoref{lem:summation} allows us to easily calculate $\mathcal{T}$ in a different way. Note that $\omega(n)$ is an additive function, so $\rho_{k}(n)=k^{\omega(n)}$ is a multiplicative function. So, we can apply \autoref{lem:summation} for $\rho$ if we know the value of $g(n)$. In fact, we have the following result.
		\begin{theorem}
			Let $n,k$ be positive integers. Then
				\begin{align*}
					k^{\omega(n)}
						& = \sum_{d\mid n}\delta_{k}(d)
				\end{align*}
			where $\delta$ is a generalization of $\mu^{2}$ defined as
				\begin{align*}
					\delta_{k}(n)
						& =
							\begin{cases}
								1 & \mbox{ if }n=1\\
								0 & \mbox{ if }p^{2}\mid n\mbox{ for some prime }p\\
								(k-1)^{\omega(n)}& \mbox{ if }n\mbox{ is square-free}
							\end{cases}
				\end{align*}
		\end{theorem}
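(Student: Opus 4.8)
The plan is to exploit multiplicativity and reduce the identity to prime powers. First I would observe that for each fixed $k$ the function $n \mapsto k^{\omega(n)}$ is multiplicative, since $\omega$ is additive on coprime arguments and hence $k^{\omega(mn)} = k^{\omega(m)}k^{\omega(n)}$ whenever $\gcd(m,n)=1$. Next I would verify directly from the definition that $\delta_k$ is itself multiplicative: if $\gcd(m,n)=1$ and both are square-free, then $mn$ is square-free with $\omega(mn)=\omega(m)+\omega(n)$, so $\delta_k(mn)=(k-1)^{\omega(m)+\omega(n)}=\delta_k(m)\delta_k(n)$; and if either factor is divisible by the square of a prime, then so is $mn$, whence both sides vanish. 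Since the constant function $\mathbf{1}$ is multiplicative, the divisor sum $\sum_{d\mid n}\delta_k(d) = (\delta_k \ast \mathbf{1})(n)$ is a Dirichlet convolution of multiplicative functions and is therefore multiplicative.

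With both sides of the claimed identity now known to be multiplicative, it suffices to check equality at prime powers $n=p^{a}$ (the case $n=1$ being trivial, as both sides equal $1$). This is the only genuine computation, and it is short. The left side is $k^{\omega(p^{a})}=k$. For the right side, in the sum $\sum_{j=0}^{a}\delta_k(p^{j})$ every term with $j\geq 2$ vanishes because $p^{2}\mid p^{j}$, leaving only $\delta_k(1)+\delta_k(p)=1+(k-1)^{\omega(p)}=1+(k-1)=k$. The two sides therefore agree at every prime power, and by multiplicativity they agree for all $n$.

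I do not anticipate a real obstacle here; the substance lies entirely in recognizing that $\delta_k$ is engineered precisely so that its divisor sum telescopes to $k$ at each prime power. An alternative would be to prove the equivalent statement $\delta_k=\mu\ast\bigl(k^{\omega}\bigr)$ by M\"{o}bius inversion, but the prime-power route above is cleaner and more transparent. The single point that warrants care is establishing that $\delta_k$ is multiplicative as a function on all of $\mathbb{N}$, rather than merely being defined case-by-case, and the coprime-product argument in the first paragraph settles this.
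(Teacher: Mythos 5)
Your proof is correct and complete. It does, however, run in the opposite direction from the paper's. The paper treats $\delta_{k}$ as the function \emph{defined} by the identity, namely $\delta_{k}=\mu\ast k^{\omega}$, which its Lemma I guarantees is unique and multiplicative; it then solves for the values $\delta_{k}(p)=k-1$ and $\delta_{k}(p^{e})=0$ for $e>1$, and finishes the square-free case by a separate induction on the number of prime factors using the binomial expansion $((k-1)+1)^{r}=\sum_{s}\binom{r}{s}(k-1)^{s}$. You instead take the explicit formula for $\delta_{k}$ as given, check directly that it is multiplicative (including the observation that a square factor in either argument forces both sides to vanish), and verify the identity only at prime powers, where it telescopes to $1+(k-1)=k$. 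The two arguments share the same prime-power computation at their core, but yours is the shorter and cleaner verification: once multiplicativity of $\delta_{k}$ is in hand, the value $(k-1)^{r}$ on a product of $r$ distinct primes is immediate, and the paper's binomial induction becomes unnecessary. What the paper's direction buys is the complementary fact that $\delta_{k}$ is the \emph{unique} arithmetic function satisfying the divisor-sum relation (it is a derivation rather than a verification), which ties into the computability claim of its Lemma I; your closing remark about proving $\delta_{k}=\mu\ast k^{\omega}$ by M\"{o}bius inversion is essentially the route the paper takes.
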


		\begin{proof}
			If $n=1$, then $\delta_{k}(1)=k^{\omega(1)}=1$. We already know that $\delta$ is multiplicative, so we can focus on $\delta_{k}(p^{e})$ first. We have
				\begin{align*}
					k^{\omega(p)}
						& = \delta_{k}(1)+\delta_{k}(p)
				\end{align*}
			so $\delta_{k}(p)=k-1$. For $e>1$,
				\begin{align*}
					k^{\omega(p^{e})}
						& = \delta_{k}(1)+\delta_{k}(p)+\ldots+\delta_{k}(p^{e})
				\end{align*}
			Since $\omega(p^{e})=1$, we have that $\delta_{k}(p^{e})=0$ for $e>1$. Due to multiplicative nature of $\delta$, if $n=p^{e}s$ with $p\nmid s$ and $e>1$, then $\delta_{k}(n)=\delta_{k}(p^{e})\delta_{k}(s)=0$. Now, we only have to find $\delta_{k}(n)$ for square-free $n$. We will show by induction that $\delta_{k}(p_{1}\cdots p_{n})=(k-1)^{n}$. The case $r=1$ is already shown. So, assume that $\delta_{k}(p_{1}\cdots p_{r-1})(k-1)^{r-1}$. Then
				\begin{align*}
					k^{\omega(p_{1}\cdots p_{r})}
						& = \sum_{d\mid p_{1}\cdots p_{r}}\delta_{k}(d)
				\end{align*}
			If we choose a subset of $s$ primes from the set $\{p_{1},\ldots,p_{r}\}$ for $s<r$, then for any subset $\{p_{i_{1}},\ldots,p_{i_{s}}\}$,
				\begin{align*}
					\delta_{k}(p_{i_{1}}\cdots p_{i_{s}})
						& = (k-1)^{s}
				\end{align*}
			Since there are $\binom{r}{s}$ such subsets, we see that
				\begin{align*}
					k^{r}
						& = \binom{r}{0}\delta_{k}(1)+\binom{r}{1}\delta_{k}(p_{1})+\ldots+\binom{r}{r-1}\delta_{k}(p_{1}\cdots p_{r-1})+\delta_{k}(p_{1}\cdots p_{r})\\
					((k-1)+1)^{r}
						& = \binom{r}{0}+\binom{r}{1}(k-1)+\ldots+\binom{r}{r-1}(k-1)^{r-1}+\delta_{k}(p_{1}\cdots p_{r})
				\end{align*}
			From this, it is obvious that $\delta_{k}(p_{1}\cdots p_{r})=(k-1)^{r}$.
		\end{proof}
	We do not say $\delta$ is a direct generalization of $\mu$, rather it should be considered a generalization of $\mu^{2}$ which generalizes the well known identity
		\begin{align*}
			2^{\omega(n)}
				& = \sum_{d\mid n}\mu^{2}(d)
		\end{align*}
	Evidently, we have the following result using that $\lfloor x\rfloor=x+O(1)$.
		\begin{theorem}\label{thm:T}
			For positive integers $k$ and $n$,
				\begin{align*}
					\sum_{i=1}^{n}k^{\omega(n)}
						& = \sum_{i=1}^{n}\left\lfloor{\dfrac{n}{i}}\right\rfloor \delta_{k}(i)\\
						& = \sum_{i=1}^{n}\Delta\left(\dfrac{n}{i}\right)\\
						& = n\sum_{i=1}^{n}\dfrac{\delta_{k}(i)}{i}+O\left(\Delta_{k}(n)\right)
				\end{align*}
		\end{theorem}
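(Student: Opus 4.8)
The plan is to verify the three claimed equalities one at a time, each following from a single structural fact, so no single hard computation is needed.

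First I would dispatch the equality $\sum_{i=1}^n k^{\omega(i)} = \sum_{i=1}^n \lfloor n/i\rfloor\,\delta_k(i)$ by invoking the two preceding results. The summand $\rho_k(i)=k^{\omega(i)}$ is multiplicative because $\omega$ is additive, and the previous theorem shows that its divisor sum is inverted precisely by $\delta_k$, i.e. $k^{\omega(n)}=\sum_{d\mid n}\delta_k(d)$. Hence $\delta_k$ is exactly the function $g$ produced by \autoref{lem:summation} for the choice $f=\rho_k$, and applying that lemma with $F(n)=\sum_{i\le n}k^{\omega(i)}$ gives the first line immediately, with no additional argument.

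For the middle equality I would introduce the summatory function $\Delta_k(x)=\sum_{i\le x}\delta_k(i)$ (this is the function written $\Delta$ on the second line) and interchange the order of summation in the Dirichlet manner. Writing $\lfloor n/i\rfloor=\sum_{m\le n/i}1$ turns $\sum_{i\le n}\lfloor n/i\rfloor\,\delta_k(i)$ into a double sum over pairs $(i,m)$ with $im\le n$; regrouping by the value of $m$ yields
\[
\sum_{i=1}^n\left\lfloor\frac{n}{i}\right\rfloor\delta_k(i)=\sum_{m\le n}\sum_{i\le n/m}\delta_k(i)=\sum_{m=1}^n\Delta_k\!\left(\frac{n}{m}\right),
\]
which is the desired expression $\sum_{i=1}^n\Delta(n/i)$.

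For the final equality I would split off the floor via $\lfloor n/i\rfloor=n/i-\{n/i\}$, obtaining
\[
\sum_{i=1}^n\left\lfloor\frac{n}{i}\right\rfloor\delta_k(i)=n\sum_{i=1}^n\frac{\delta_k(i)}{i}-\sum_{i=1}^n\left\{\frac{n}{i}\right\}\delta_k(i),
\]
so it remains to bound the last sum. The hard part, such as it is, lies entirely here: I would use that $\delta_k(i)\ge 0$ for $k\ge 1$ (it equals $(k-1)^{\omega(i)}\ge 0$ on square-free arguments and vanishes otherwise) together with $0\le\{n/i\}<1$, so that each term is at most $\delta_k(i)$, giving the bound $\sum_{i\le n}\delta_k(i)=\Delta_k(n)$ and hence the error $O(\Delta_k(n))$. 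The only genuine subtlety is this nonnegativity of $\delta_k$, which is what lets me replace the sum of absolute values cleanly by $\Delta_k(n)$; I would also make explicit that the $\Delta$ of the second line and the $\Delta_k$ of the error term denote the same summatory function $\sum_{i\le x}\delta_k(i)$, keeping the bookkeeping consistent across all three lines.
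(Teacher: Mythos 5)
Your proposal is correct and follows essentially the same route the paper intends: the first line is Lemma~\ref{lem:summation} applied with $f=\rho_k$ and $g=\delta_k$ (justified by the preceding theorem), the second is the standard Dirichlet interchange, and the third is $\lfloor x\rfloor=x+O(1)$ with the error controlled by $\Delta_k(n)$. Your explicit observation that $\delta_k\ge 0$ is needed to bound the error sum by $\Delta_k(n)$ is a worthwhile detail the paper leaves implicit.
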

	where
		\begin{align*}
			\Delta_{k}(x)
				& = \sum_{i\leq x}\delta_{k}(i)
		\end{align*}
	Clearly, \autoref{thm:T} tells us that $\mathcal{T}$ is a weighted sum of reciprocals of square-free numbers not exceeding $n$ and that the error term is the sum of $\delta_{k}(d)$ for all square-free $d\leq n$. Let $Q(x)$ be the number of square-free numbers not exceeding $x$. Then
		\begin{align*}
			Q(x)
				& = \sum_{n\leq x}\mu^{2}(n)\\
				& = \dfrac{6x}{\pi^{2}}+O(\sqrt{x})
		\end{align*}
	Since $\delta$ is a generalization of $\mu^{2}$, the author hopes that the study of $\Delta_{k}(x)$ will be of high interest to us because the error term of $Q(x)$ is not easily improved with elementary means. \textcite{liu_2016} currently has the best possible estimate $O(x^{11/35+\epsilon})$ for the error term of $Q$ \textit{assuming the Riemann Hypothesis}. In fact, all the good estimates\footnote{consult \textcite{liu_2016}} of this error term assumes the Riemann Hypothesis. We have seen in case of the prime number theorem, the study of weighted sums of Von Mangoldt function and Chebyshev functions eventually led to the elementary proof of the prime number theorem. Similarly, the author hopes that the study of $\Delta$ and the weighted sums of $\delta$ might shed some light on the distribution of square-free numbers. Let us try to calculate $\mathcal{T}$ as follows.
		\begin{align*}
			\mathcal{T}_{k}(x)
				& = \sum_{n\leq x}\left\lfloor{\dfrac{x}{n}}\right\rfloor \delta_{k}(n)\\
				& = \sum_{n\leq x}\left\lfloor{\dfrac{x}{n}}\right\rfloor(k-1)^{\omega(n)}-\sum_{\substack{n\leq x\\s^{2}\mid n\\s>1}}\left\lfloor{\dfrac{x}{n}}\right\rfloor(k-1)^{\omega(n)}\\
				& = \mathcal{A}_{k}(x)-\mathcal{B}_{k}(x)
		\end{align*}
	where
		\begin{align}
			\mathcal{A}_{k}(x)
				& = \sum_{n\leq x}\left\lfloor{\dfrac{x}{n}}\right\rfloor(k-1)^{\omega(n)}\label{eqn:A}\\
			\mathcal{B}_{k}(x)
				& = \sum_{\substack{n\leq x\\s^{2}\mid n\\s>1}}\left\lfloor{\dfrac{x}{n}}\right\rfloor(k-1)^{\omega(n)}\label{eqn:B}
		\end{align}
	At this point, we should define the functions $\mathcal{C},\mathcal{E}$ and $\mathcal{F}$.
		\begin{align}
			\mathcal{C}_{k}(x,m)
				& = \sum_{n\leq x}\left\lfloor{\dfrac{x}{n}}\right\rfloor(k-1)^{\omega(mn)}\label{eqn:C}\\
			\mathcal{E}_{k}(x, m)
				& =\sum_{\substack{n\leq x\\\gcd(m,n)=1}}\left\lfloor{\dfrac{x}{n}}\right\rfloor(k-1)^{\omega(n)}\label{eqn:E}\\
			\mathcal{F}_{k}(x, m)
				& = \sum_{\substack{n\leq x\\\gcd(m,n)>1}}\left\lfloor{\dfrac{x}{n}}\right\rfloor(k-1)^{\omega(mn)}\label{eqn:F}
		\end{align}
	We will show that we can write $\mathcal{T}$ in terms of $\mathcal{E}$ and $\mathcal{F}$. In order to do that, we require the auxiliary function $\mathcal{C}$.We can write $\mathcal{B}$ as
		\begin{align*}
			\mathcal{B}_{k}(x)
				& = \sum_{d\leq x}\mu(d)\sum_{r\leq\lfloor{x/d^{2}}\rfloor}\left\lfloor{\dfrac{x/d^{2}}{r}}\right\rfloor(k-1)^{\omega(d^{2}r)}\\
				& = \sum_{d\leq x}\mu(d)\mathcal{C}_{k}(\left\lfloor{x/d^{2}}\right\rfloor,d^{2})
		\end{align*}
	Note that $\omega(m^{2}n)=\omega(mn)$. So, $\mathcal{C}_{k}(x,d^{2})=\mathcal{C}_{k}(x,d)$. First, see the following for $\mathcal{E}$.
		\begin{align*}
			\mathcal{E}_{k}(x, m)
				& = \sum_{n\leq x}\left\lfloor{\dfrac{x}{n}}\right\rfloor(k-1)^{\omega(n)}-\sum_{\substack{n\leq x\\\gcd(m,n)>1}}\left\lfloor{\dfrac{x}{n}}\right\rfloor(k-1)^{\omega(n)}
		\end{align*}
	Now,
		\begin{align*}
			\sum_{\substack{n\leq x\\\gcd(m,n)>1}}\left\lfloor{\dfrac{x}{n}}\right\rfloor(k-1)^{\omega(mn)}
				& = \sum_{d\mid m}\mu(d)\sum_{n\leq\lfloor{x/d}\rfloor}\left\lfloor{\dfrac{x}{dn}}\right\rfloor(k-1)^{\omega(dn)}\\
				& = \sum_{d\mid m}\mu(d)\mathcal{C}_{k}(\lfloor{x/d}\rfloor,d)
		\end{align*}
	Thus, we get a recursive formula for $\mathcal{E}$.
		\begin{lemma}\label{lem:E}
			For positive integer $m$,
				\begin{align*}
					\mathcal{E}_{k}(x, m)
						& = \mathcal{A}_{k}(x)-\sum_{d\mid m}\mu(d)\mathcal{C}_{k}(\lfloor{x/d}\rfloor, d)
				\end{align*}
		\end{lemma}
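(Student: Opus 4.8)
The plan is to realize $\mathcal{E}$ as $\mathcal{A}$ with the ``non-coprime'' terms stripped off, and to evaluate that removed piece by detecting the coprimality condition with the M\"{o}bius function. The decomposition recorded immediately before the statement,
\[
	\mathcal{E}_{k}(x,m) = \mathcal{A}_{k}(x) - \sum_{\substack{n\leq x\\\gcd(m,n)>1}}\left\lfloor\frac{x}{n}\right\rfloor(k-1)^{\omega(n)},
\]
reduces the whole lemma to showing that this second sum equals $\sum_{d\mid m}\mu(d)\,\mathcal{C}_{k}(\lfloor x/d\rfloor,d)$. Thus once the splitting is in hand, the only genuine work is to convert a sum restricted by a gcd condition into an unrestricted, divisor-indexed sum.

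For the conversion I would insert the identity $\sum_{d\mid\gcd(m,n)}\mu(d)=\mathbf{1}[\gcd(m,n)=1]$ to encode the gcd restriction, interchange the two summations so that the outer index becomes a divisor $d\mid m$ while the inner sum runs over those $n\leq x$ with $d\mid n$, and then reindex by $n=dr$. The inner sum then reads $\sum_{r\leq\lfloor x/d\rfloor}\lfloor x/(dr)\rfloor(k-1)^{\omega(dr)}$, and here I would apply the nesting identity $\lfloor\lfloor x/d\rfloor/r\rfloor=\lfloor x/(dr)\rfloor$ together with $\omega(dr)=\omega(n)$ to recognize it exactly as $\mathcal{C}_{k}(\lfloor x/d\rfloor,d)$ from \eqref{eqn:C}. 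Note that the factor $\mu(d)$ automatically confines $d$ to the squarefree divisors of $m$, which is harmless because only the prime content of $m$ enters the gcd condition; this is the same reduction underlying the earlier observation that $\mathcal{C}_{k}(x,d^{2})=\mathcal{C}_{k}(x,d)$.

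The step I expect to demand the most care is the interchange-and-reindex bookkeeping rather than any analytic estimate. One must verify that the floor-nesting identity is applied with the correct truncation $r\leq\lfloor x/d\rfloor$, and one must track the M\"{o}bius signs so that the $d=1$ term reproduces $\mathcal{A}_{k}(x)$ while the remaining divisors account precisely for the non-coprime contribution. Once these are aligned, assembling the pieces delivers the stated recursion for $\mathcal{E}_{k}(x,m)$; since every manipulation is an exact rearrangement of a finite sum, no error term arises.
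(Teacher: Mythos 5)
Your splitting $\mathcal{E}_{k}(x,m)=\mathcal{A}_{k}(x)-\sum_{\substack{n\leq x\\\gcd(m,n)>1}}\lfloor x/n\rfloor(k-1)^{\omega(n)}$ is fine, but the step that evaluates the removed piece fails. The identity $\sum_{d\mid\gcd(m,n)}\mu(d)=\mathbf{1}[\gcd(m,n)=1]$ detects the \emph{coprime} condition, so inserting it and interchanging the sums evaluates $\mathcal{E}_{k}(x,m)$ itself: carried out exactly as you describe (reindex $n=dr$, apply the nested-floor identity), it yields
\begin{align*}
	\mathcal{E}_{k}(x,m)
		& = \sum_{d\mid m}\mu(d)\sum_{r\leq\lfloor{x/d}\rfloor}\left\lfloor{\dfrac{\lfloor{x/d}\rfloor}{r}}\right\rfloor(k-1)^{\omega(dr)}
		 = \sum_{d\mid m}\mu(d)\mathcal{C}_{k}(\lfloor{x/d}\rfloor,d),
\end{align*}
not the claim that the \emph{non-coprime} sum equals $\sum_{d\mid m}\mu(d)\mathcal{C}_{k}(\lfloor{x/d}\rfloor,d)$. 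To hit the non-coprime sum you would need $\mathbf{1}[\gcd(m,n)>1]=1-\sum_{d\mid\gcd(m,n)}\mu(d)$, which gives that sum as $\mathcal{A}_{k}(x)-\sum_{d\mid m}\mu(d)\mathcal{C}_{k}(\lfloor{x/d}\rfloor,d)$; substituting this back into your splitting again returns $\mathcal{E}_{k}(x,m)=\sum_{d\mid m}\mu(d)\mathcal{C}_{k}(\lfloor{x/d}\rfloor,d)$. Your intermediate claim is therefore off by exactly the $d=1$ term and a global sign, and no amount of bookkeeping closes that gap.

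The deeper problem is that the identity you set out to prove does not hold as stated, and your (correctly executed) M\"{o}bius insertion actually refutes it: combined with the splitting it would force $\mathcal{E}_{k}(x,m)=\mathcal{A}_{k}(x)-\mathcal{E}_{k}(x,m)$. A concrete check: for $k=2$ every weight $(k-1)^{\omega(n)}$ is $1$, so with $x=m=2$ one has $\mathcal{E}_{2}(2,2)=\lfloor{2/1}\rfloor=2$, while $\mathcal{A}_{2}(2)-\sum_{d\mid 2}\mu(d)\mathcal{C}_{2}(\lfloor{2/d}\rfloor,d)=3-(3-1)=1$. The paper's own derivation contains the same confusion (it asserts that the sum over $\gcd(m,n)>1$ equals $+\sum_{d\mid m}\mu(d)\mathcal{C}_{k}(\lfloor{x/d}\rfloor,d)$, with the exponent drifting among $\omega(n)$, $\omega(mn)$ and $\omega(dn)$), so you have faithfully reproduced its error rather than proved the lemma. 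The statement your method actually establishes, and the one that should replace the lemma, is $\mathcal{E}_{k}(x,m)=\sum_{d\mid m}\mu(d)\mathcal{C}_{k}(\lfloor{x/d}\rfloor,d)$, whose $d=1$ term is $\mathcal{A}_{k}(x)$.
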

	For $\mathcal{F}$, we have
		\begin{align*}
			\mathcal{F}_{k}(x, m)
				& = \sum_{\substack{n\leq x\\\gcd(m,n)>1}}\left\lfloor{\dfrac{x}{n}}\right\rfloor(k-1)^{\omega(mn)}\\
				& = \sum_{d\mid m}\mu(d)\sum_{di\leq x}\left\lfloor{\dfrac{x}{di}}\right\rfloor(k-1)^{\omega(mdn)}
		\end{align*}
	Since $d\mid m$, we have $\omega(mdn)=\omega(mn)$. Then,
		\begin{align*}
			\mathcal{F}_{k}(x, m)
				& = \sum_{d\mid m}\mu(d)\sum_{n\leq x/d}\left\lfloor{\dfrac{x/d}{n}}\right\rfloor(k-1)^{\omega(mn)}
		\end{align*}
	We get the following.
		\begin{lemma}\label{lem:F}
			For positive integer $m$,
				\begin{align*}
					\mathcal{F}_{k}(x, m)
						& = \sum_{d\mid m}\mu(d)\mathcal{C}_{k}(\lfloor{x/d}\rfloor, m)
				\end{align*}
		\end{lemma}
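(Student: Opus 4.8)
The plan is to treat $\mathcal{F}$ exactly as $\mathcal{E}$ was treated for \autoref{lem:E}: detect the restriction on the summation variable with the Möbius function and collapse the result into values of the auxiliary function $\mathcal{C}_k$ defined in \eqref{eqn:C}. Starting from \eqref{eqn:F}, I would encode the constraint on $n$ through a Möbius sum over the divisors $d$ of $m$, as in the derivation leading to \autoref{lem:E}, and interchange the order of summation so that $\sum_{d\mid m}\mu(d)$ stands outside an inner sum running over the $n\le x$ that are divisible by $d$.

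The core of the argument is then the change of variable $n=dr$ in each inner sum. Under this substitution the divisibility condition $d\mid n$ becomes simply $r\le x/d$, and the summand $\lfloor x/n\rfloor(k-1)^{\omega(mn)}$ becomes $\lfloor x/(dr)\rfloor(k-1)^{\omega(mdr)}$. Two elementary identities then finish the reduction. The first, and the feature specific to $\mathcal{F}$, is that $d\mid m$ forces $md$ and $m$ to have the same prime support, so $\omega(mdr)=\omega(mr)$; this is precisely why the second argument of $\mathcal{C}_k$ remains $m$ here, in contrast with \autoref{lem:E}, where the analogous step produces $\mathcal{C}_k(\cdot,d)$. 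The second is the nested-floor identity $\lfloor x/(dr)\rfloor=\bigl\lfloor\lfloor x/d\rfloor/r\bigr\rfloor$, valid for positive integers $d,r$, together with the observation that $r\le x/d$ and $r\le\lfloor x/d\rfloor$ describe the same set of integers $r$. After applying both, the inner sum is exactly $\sum_{r\le\lfloor x/d\rfloor}\bigl\lfloor\lfloor x/d\rfloor/r\bigr\rfloor(k-1)^{\omega(mr)}=\mathcal{C}_k(\lfloor x/d\rfloor,m)$, which assembles into the stated formula.

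I expect the only real difficulty to be bookkeeping rather than any deep step. The delicate points are keeping the indices $(d,r,n)$ disentangled through the interchange of summation and verifying that the passage from $\lfloor x/(dr)\rfloor$ to $\bigl\lfloor\lfloor x/d\rfloor/r\bigr\rfloor$ is an exact equality, so that the reduction to $\mathcal{C}_k(\lfloor x/d\rfloor,m)$ introduces no stray error term. It is also worth double-checking the $\omega(mdr)=\omega(mr)$ simplification against the parallel computation for $\mathcal{E}$, since that single observation is what separates the two lemmas and fixes the second argument of $\mathcal{C}_k$ at $m$ rather than $d$.
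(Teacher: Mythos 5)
Your plan reproduces the paper's own derivation step for step (a M\"{o}bius sum over $d\mid m$, the substitution $n=dr$, the observation $\omega(mdr)=\omega(mr)$ for $d\mid m$, and the nested-floor identity), and the latter three steps are all sound. The problem is the very first step, and it is a genuine gap --- one the paper's own sketch shares. The identity $\sum_{d\mid\gcd(m,n)}\mu(d)=[\gcd(m,n)=1]$ means that pulling $\sum_{d\mid m}\mu(d)$ outside an inner sum over those $n\le x$ with $d\mid n$ detects the $n$ \emph{coprime} to $m$, not the $n$ with $\gcd(m,n)>1$ that define $\mathcal{F}_{k}$. Carried out correctly, your computation establishes the complementary identity
	\begin{align*}
		\sum_{\substack{n\le x\\\gcd(m,n)=1}}\left\lfloor\dfrac{x}{n}\right\rfloor(k-1)^{\omega(mn)}
			& = (k-1)^{\omega(m)}\mathcal{E}_{k}(x,m)
			 = \sum_{d\mid m}\mu(d)\mathcal{C}_{k}(\lfloor{x/d}\rfloor,m)
	\end{align*}
rather than the stated formula for $\mathcal{F}_{k}$, which is in fact false as written: for $k=2$, $m=2$, $x=2$ one has $\mathcal{F}_{2}(2,2)=\lfloor 2/2\rfloor=1$, while $\sum_{d\mid 2}\mu(d)\mathcal{C}_{2}(\lfloor 2/d\rfloor,2)=\mathcal{C}_{2}(2,2)-\mathcal{C}_{2}(1,2)=3-1=2$.

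The repair is to subtract rather than to detect directly: write $\mathcal{F}_{k}(x,m)$ as $\mathcal{C}_{k}(x,m)$ minus the sum over $n$ coprime to $m$, and apply your M\"{o}bius step to that coprime part. This gives
	\begin{align*}
		\mathcal{F}_{k}(x,m)
			& = \mathcal{C}_{k}(x,m)-\sum_{d\mid m}\mu(d)\mathcal{C}_{k}(\lfloor{x/d}\rfloor,m)
			 = -\sum_{\substack{d\mid m\\d>1}}\mu(d)\mathcal{C}_{k}(\lfloor{x/d}\rfloor,m)
	\end{align*}
so the correct statement differs from the one you set out to prove by the $d=1$ term and an overall sign. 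Everything downstream of the M\"{o}bius step in your outline (the change of variable, $\omega(mdr)=\omega(mr)$, the exact nested-floor equality) can be kept verbatim; it is only the opening detection step, and consequently the statement of the lemma itself, that needs to change. The same issue should be checked in the derivation of \autoref{lem:E} and in the results that feed on these two lemmas.
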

	Now, we are able to completely characterize $\mathcal{C}$ in terms of $\mathcal{E}$ and $\mathcal{F}$ which in turn implies that $\mathcal{B}$ can be characterized with $\mathcal{E}$ and $\mathcal{F}$ as well.
		\begin{theorem}\label{thm:C}
			For a positive integer $m$, $\mathcal{C}$ can be represented in terms of $\mathcal{E}$ and $\mathcal{F}$ as follows.
				\begin{align*}
					\mathcal{C}_{k}(x, m)
						& = (k-1)^{\omega(m)}\mathcal{E}_{k}(x, m)+\mathcal{F}_{k}(x, m)
				\end{align*}
		\end{theorem}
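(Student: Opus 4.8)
The plan is to prove this identity by directly partitioning the defining sum for $\mathcal{C}_{k}(x,m)$ according to whether the summation variable $n$ is coprime to the fixed modulus $m$. Recall from \eqref{eqn:C} that
	\begin{align*}
		\mathcal{C}_{k}(x, m)
			& = \sum_{n\leq x}\left\lfloor{\dfrac{x}{n}}\right\rfloor(k-1)^{\omega(mn)}.
	\end{align*}
Every integer $n\leq x$ satisfies exactly one of the two mutually exclusive conditions $\gcd(m,n)=1$ or $\gcd(m,n)>1$, so the first step is simply to break the single sum into the corresponding two subsums over these disjoint ranges.

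The subsum taken over those $n$ with $\gcd(m,n)>1$ is, by the very definition in \eqref{eqn:F}, precisely $\mathcal{F}_{k}(x,m)$, so that piece requires no further manipulation. The crux lies in the subsum over $n$ coprime to $m$. Here I would invoke the additivity of $\omega$ already noted in the paper: since $\omega$ counts distinct prime divisors, whenever $\gcd(m,n)=1$ the prime factorizations of $m$ and $n$ are disjoint, whence $\omega(mn)=\omega(m)+\omega(n)$ and therefore $(k-1)^{\omega(mn)}=(k-1)^{\omega(m)}(k-1)^{\omega(n)}$. Pulling the constant factor $(k-1)^{\omega(m)}$ outside the sum leaves exactly $\sum_{n\leq x,\ \gcd(m,n)=1}\lfloor x/n\rfloor(k-1)^{\omega(n)}$, which by \eqref{eqn:E} equals $\mathcal{E}_{k}(x,m)$.

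Combining the two pieces then yields the claimed formula $\mathcal{C}_{k}(x,m)=(k-1)^{\omega(m)}\mathcal{E}_{k}(x,m)+\mathcal{F}_{k}(x,m)$. I expect no genuine obstacle in this argument; the only point demanding care is the factorization $(k-1)^{\omega(mn)}=(k-1)^{\omega(m)}(k-1)^{\omega(n)}$, which is valid only on the coprime portion of the range. Indeed, it is precisely this asymmetry — that the exponent splits when $\gcd(m,n)=1$ but not otherwise — that forces the definitions of $\mathcal{E}$ (carrying $\omega(n)$) and $\mathcal{F}$ (carrying $\omega(mn)$) to differ, and thereby makes them the natural building blocks for $\mathcal{C}$.
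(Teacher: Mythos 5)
Your proof is correct and follows exactly the same route as the paper: split the sum defining $\mathcal{C}_{k}(x,m)$ according to whether $\gcd(m,n)=1$ or $\gcd(m,n)>1$, identify the latter piece as $\mathcal{F}_{k}(x,m)$, and use $\omega(mn)=\omega(m)+\omega(n)$ on the coprime piece to extract the factor $(k-1)^{\omega(m)}$ and obtain $\mathcal{E}_{k}(x,m)$. No differences worth noting.
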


		\begin{proof}
			Write $\mathcal{C}$ as
				\begin{align*}
					\mathcal{C}_{k}(x, m)
						& = \sum_{\substack{n\leq x\\\gcd(m,n)=1}}\left\lfloor{\dfrac{x}{n}}\right\rfloor(k-1)^{\omega(mn)}+\sum_{\substack{n\leq x\\\gcd(m,n)>1}}\left\lfloor{\dfrac{x}{n}}\right\rfloor(k-1)^{\omega(mn)}\
				\end{align*}
			Since $\omega(mn)=\omega(m)+\omega(n)$ for $\gcd(m,n)=1$,
				\begin{align*}
					\mathcal{C}_{k}(x, m)
						& = (k-1)^{\omega(m)}\sum_{\substack{n\leq x\\\gcd(m,n)=1}}\left\lfloor{\dfrac{x}{n}}\right\rfloor(k-1)^{\omega(n)}+\mathcal{F}_{k}(x, m)\\
						& = (k-1)^{\omega(m)}\mathcal{E}_{k}(x, m)+\mathcal{F}_{k}(x, m)
				\end{align*}
			This proves the result.
		\end{proof}
	Finally, we express $\mathcal{A}$ in terms of $\mathcal{E}$ and $\mathcal{F}$.
		\begin{align*}
			\mathcal{A}_{k}(x)
				& = \sum_{n\leq x}\left\lfloor{\dfrac{x}{n}}\right\rfloor(k-1)^{\omega(n)}\\
				& = \sum_{s\leq x}\mu(s)\sum_{n\leq x/s}\left\lfloor{\dfrac{x}{ns}}\right\rfloor(k-1)^{\omega(ns)}\\
				& = \sum_{s\leq x}\mu(s)\left(\sum_{\substack{n\leq x/s\\\gcd(n,s)=1}}\left\lfloor{\dfrac{x}{ns}}\right\rfloor(k-1)^{\omega(ns)}+\sum_{\substack{n\leq x/s\\\gcd(n,s)>1}}\left\lfloor{\dfrac{x}{ns}}\right\rfloor(k-1)^{\omega(ns)}\right)\\
				& = \sum_{s\leq x}\mu(s)\left((k-1)^{\omega(s)}\sum_{\substack{n\leq x/s\\\gcd(n,s)=1}}\left\lfloor{\dfrac{x/s}{n}}\right\rfloor(k-1)^{\omega(n)}+\sum_{\substack{n\leq x/s\\\gcd(n,s)>1}}\left\lfloor{\dfrac{x/s}{n}}\right\rfloor(k-1)^{\omega(ns)}\right)
		\end{align*}
	Thus, we have the following result.
		\begin{theorem}\label{thm:A}
			$\mathcal{A}$ can be represented in terms of $\mathcal{E}$ and $\mathcal{F}$ as follows.
				\begin{align*}
					\mathcal{A}_{k}(x)
						& = \sum_{s\leq x}\mu(s)\left((k-1)^{\omega(s)}\mathcal{E}_{k}(\lfloor{x/s}\rfloor, s)+\mathcal{F}_{k}(\lfloor{x/s}\rfloor, s)\right)
				\end{align*}
		\end{theorem}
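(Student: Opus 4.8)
The plan is to prove the identity by the explicit chain of manipulations recorded just above the statement, so that the argument decomposes into one substantial step and two routine ones, after which the definitions \eqref{eqn:E} and \eqref{eqn:F} can simply be read off. The pivotal step is to pass from the single weighted sum \eqref{eqn:A} defining $\mathcal{A}_k$ to the M\"obius-weighted double sum
\[
    \mathcal{A}_k(x) = \sum_{s\le x}\mu(s)\sum_{n\le x/s}\left\lfloor\frac{x}{ns}\right\rfloor(k-1)^{\omega(ns)} .
\]
Here the extra modulus $s$ is introduced precisely so that it can later be made to interact with the additive function $\omega$. I expect this to be the main obstacle: one must generate both the weight $\mu(s)$ and the shift $\omega(n)\mapsto\omega(ns)$ from an arithmetic identity (morally a convolution of $(k-1)^{\omega}$ against $\mathbf 1$ and $\mu$), and then track the ranges of summation carefully under the substitution $n\mapsto ns$. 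Since every subsequent step is forced, any slip in the M\"obius weight or in the summation bounds at this stage would propagate directly to the final formula, so I would pin this identity down first, ideally checking it against a small numerical value of $x$ and $k$ before building on it.

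With the double sum secured, the second step is to split the inner sum according to whether $\gcd(n,s)=1$ or $\gcd(n,s)>1$, exactly as displayed above the statement. On the coprime part I would invoke additivity of $\omega$ on coprime arguments, $\omega(ns)=\omega(s)+\omega(n)$, to factor out the constant $(k-1)^{\omega(s)}$, mirroring the computation in \autoref{thm:C}; on the non-coprime part the exponent $\omega(ns)$ is kept intact. The third step is the elementary floor identity $\left\lfloor\lfloor x/s\rfloor/n\right\rfloor=\lfloor x/(ns)\rfloor$, which lets me rewrite each weight $\lfloor x/(ns)\rfloor$ as $\lfloor(x/s)/n\rfloor$ and each upper limit $x/s$ as $\lfloor x/s\rfloor$, so that both inner sums become genuine evaluations at the argument $\lfloor x/s\rfloor$ with modulus $s$.

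At this point the coprime part agrees term for term with the definition \eqref{eqn:E} of $\mathcal{E}_k(\lfloor x/s\rfloor,s)$, and the non-coprime part agrees with the definition \eqref{eqn:F} of $\mathcal{F}_k(\lfloor x/s\rfloor,s)$, yielding
\[
    \mathcal{A}_k(x)=\sum_{s\le x}\mu(s)\Bigl((k-1)^{\omega(s)}\mathcal{E}_k(\lfloor x/s\rfloor,s)+\mathcal{F}_k(\lfloor x/s\rfloor,s)\Bigr),
\]
which is the assertion. To summarize, the recognition of $\mathcal{E}$ and $\mathcal{F}$ is automatic once the gcd-split and the floor identity are in place, and those are bookkeeping; the real content, and the step I would scrutinize most, is the opening re-expression of $\mathcal{A}_k$ as a M\"obius-weighted double sum with the correct ranges and the shifted exponent $\omega(ns)$.
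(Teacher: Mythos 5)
You have correctly identified where all the content lies: once the M\"obius-weighted double sum is granted, the gcd-split, the additivity $\omega(ns)=\omega(s)+\omega(n)$ on the coprime part, and the floor identity $\left\lfloor\lfloor x/s\rfloor/n\right\rfloor=\lfloor x/(ns)\rfloor$ do reduce everything to the definitions \eqref{eqn:E} and \eqref{eqn:F}, exactly as in the paper's own displayed derivation. But the pivotal identity is the one thing your proposal does not actually prove, and it cannot be proved, because it is false. Collapsing the double sum by setting $m=ns$ (the constraints $s\le x$ and $n\le x/s$ are together equivalent to $m\le x$, and for fixed $m$ the pairs $(s,n)$ run over the divisors $s\mid m$) gives
\begin{align*}
\sum_{s\le x}\mu(s)\sum_{n\le x/s}\left\lfloor\frac{x}{ns}\right\rfloor(k-1)^{\omega(ns)}
&=\sum_{m\le x}\left\lfloor\frac{x}{m}\right\rfloor(k-1)^{\omega(m)}\sum_{s\mid m}\mu(s)=\lfloor x\rfloor,
\end{align*}
since $\sum_{s\mid m}\mu(s)$ vanishes for $m>1$. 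The numerical check you wisely propose would have exposed this at once: for $k=2$ and $x=2$ the left-hand side of your pivotal identity is $\lfloor 2/1\rfloor+\lfloor 2/2\rfloor=3$, while the right-hand side is $\mu(1)\bigl(\lfloor 2/1\rfloor+\lfloor 2/2\rfloor\bigr)+\mu(2)\lfloor 2/2\rfloor=3-1=2$.

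Because, as you say yourself, every subsequent step is forced, this gap is fatal: you would be faithfully transporting through correct manipulations a quantity that is not $\mathcal{A}_k(x)$. (The paper's own proof makes exactly the same unjustified leap, so reproducing its displayed chain does not close the gap.) The ``arithmetic identity'' you gesture at does exist, but in the form: if $f=\mu\ast g$ then $\sum_{m\le x}\lfloor x/m\rfloor f(m)=\sum_{s\le x}\mu(s)\sum_{n\le x/s}\lfloor x/(ns)\rfloor g(n)$ --- note the inner summand is $g(n)$, not $g(ns)$. No convolution produces the shifted exponent $\omega(ns)$ together with the weight $\mu(s)$ while preserving the value of $\mathcal{A}_k$, so any repair of the statement must begin by replacing that first line with a correct expansion rather than by tightening the bookkeeping that follows it.
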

	By \autoref{thm:A} and \autoref{thm:C}, we have now expressed both $\mathcal{A}$ and $\mathcal{B}$, hence also $\mathcal{T}$ in terms of $\mathcal{F}$ and $\mathcal{F}$ which are easier to calculate. Thus, we have the following.
		\begin{theorem}\label{thm:T2}
			$\mathcal{T}$ can be represented in terms of $\mathcal{E}$ and $\mathcal{F}$ as follows.
				\begin{align*}
					\mathcal{T}_{k}(x)
						& = \sum_{s\leq x}\mu(s)\left((k-1)^{\omega(s)}\mathcal{E}_{k}(\lfloor{x/s}\rfloor, s)+\mathcal{F}_{k}(\lfloor{x/s}\rfloor, s)\right)\\
						& -\sum_{d\leq x}\mu(d)\left((k-1)^{\omega(d)}\mathcal{E}_{k}(\lfloor{x/d^{2}}\rfloor, d)+F_{k}(\lfloor{x/d^{2}}\rfloor,d)\right)
				\end{align*}
		\end{theorem}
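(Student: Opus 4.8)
The plan is to assemble this identity directly from the decomposition $\mathcal{T}_{k}(x)=\mathcal{A}_{k}(x)-\mathcal{B}_{k}(x)$ established earlier, substituting the closed forms for $\mathcal{A}$ and $\mathcal{B}$ that were developed in the preceding results. The first summand on the right-hand side of the claimed identity is already supplied verbatim by \autoref{thm:A}, so no work is needed there beyond quoting it. The real content lies in converting $\mathcal{B}$ into the same $\mathcal{E},\mathcal{F}$ language.

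First I would recall the representation of $\mathcal{B}$ derived in the text, namely
\begin{align*}
	\mathcal{B}_{k}(x)
		& = \sum_{d\leq x}\mu(d)\mathcal{C}_{k}(\lfloor x/d^{2}\rfloor, d^{2}).
\end{align*}
The key simplification is the identity $\omega(m^{2}n)=\omega(mn)$, which yields $\mathcal{C}_{k}(x,d^{2})=\mathcal{C}_{k}(x,d)$ and lets me replace the argument $d^{2}$ by $d$ in the second slot of $\mathcal{C}$. This reduces $\mathcal{B}$ to a sum over $\mathcal{C}_{k}(\lfloor x/d^{2}\rfloor, d)$, which is exactly the form to which \autoref{thm:C} applies.

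Next I would invoke \autoref{thm:C}, which expresses $\mathcal{C}_{k}(x,m)=(k-1)^{\omega(m)}\mathcal{E}_{k}(x,m)+\mathcal{F}_{k}(x,m)$, and substitute it with $m=d$ and the argument $\lfloor x/d^{2}\rfloor$. This produces
\begin{align*}
	\mathcal{B}_{k}(x)
		& = \sum_{d\leq x}\mu(d)\left((k-1)^{\omega(d)}\mathcal{E}_{k}(\lfloor x/d^{2}\rfloor, d)+\mathcal{F}_{k}(\lfloor x/d^{2}\rfloor, d)\right),
\end{align*}
which is precisely the subtracted term in the statement. Subtracting this from the expression for $\mathcal{A}_{k}(x)$ given by \autoref{thm:A} then completes the identity by pure assembly.

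The argument is essentially bookkeeping: no new estimate or combinatorial identity is introduced, only the chaining of \autoref{thm:A}, \autoref{thm:C}, and the $\omega$-collapse $\mathcal{C}_{k}(x,d^{2})=\mathcal{C}_{k}(x,d)$. Consequently the only point demanding care is the indexing of the two outer sums, since one ranges over $s\leq x$ with argument $\lfloor x/s\rfloor$ while the other ranges over $d\leq x$ with argument $\lfloor x/d^{2}\rfloor$; I would keep these sums separate rather than attempting to merge them, as their Möbius-weighted summands have genuinely different floor arguments and cannot be combined into a single sum. The mild notational obstacle is purely cosmetic — ensuring the second sum carries $d^{2}$ in its floor but $d$ in the function's second argument, consistent with the collapse used to simplify $\mathcal{B}$.
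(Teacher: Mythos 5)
Your proposal is correct and follows exactly the route the paper itself takes (implicitly, since the paper states \autoref{thm:T2} without a separate proof): decompose $\mathcal{T}_{k}=\mathcal{A}_{k}-\mathcal{B}_{k}$, quote \autoref{thm:A} for $\mathcal{A}_{k}$, rewrite $\mathcal{B}_{k}$ as a M\"{o}bius-weighted sum of $\mathcal{C}_{k}(\lfloor x/d^{2}\rfloor,d^{2})$, collapse the second argument via $\omega(d^{2}n)=\omega(dn)$, and finish with \autoref{thm:C}. No gaps; the assembly is exactly the intended argument.
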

	An immediate application of \autoref{thm:T2} is the following result for $k=2$. But we will skip using $\mathcal{E}$ and $\mathcal{F}$ since $\delta_{2}(n)=1$ for all square-free $n$.
		\begin{theorem}
			For a positive integer $n$,
				\begin{align}
					\sum_{i=1}^{n}2^{\omega(i)}
						& = \sum_{1\leq i^{2}\leq n}\mu(i)D_{2}\left(\left\lfloor{\dfrac{n}{i^{2}}}\right\rfloor\right)\label{eqn:T2}\\
						& = 2n-1+2\sum_{1<i^{2}<n}\left(\varphi\left(\left\lfloor{\dfrac{n}{i}}\right\rfloor,i\right)-\varphi(i)\right)\label{eqn:T23}
				\end{align}
			where $\varphi(x, a)$ is the number of positive integers not exceeding $x$ which are relatively prime to $a$ and $\varphi(n)=\varphi(n,n)$.
		\end{theorem}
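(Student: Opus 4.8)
The plan is to identify the left-hand side with $\mathcal{T}_2(n)$ and then establish the two equalities separately: the first by Möbius bookkeeping, the second by a direct combinatorial count. By \autoref{thm:omegasum} the left-hand side is exactly $\mathcal{T}_2(n)$. Moreover $\delta_2(1)=1$, $\delta_2(n)=(2-1)^{\omega(n)}=1$ for square-free $n$, and $\delta_2(n)=0$ otherwise, so $\delta_2=\mu^2$; hence \autoref{thm:T} gives $\mathcal{T}_2(n)=\sum_{i\le n}\lfloor n/i\rfloor\,\mu^2(i)$, which is the starting point for both parts.

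For the first equality I would insert the classical identity $\mu^2(i)=\sum_{d^2\mid i}\mu(d)$ and interchange the order of summation, writing $i=d^2 j$ so that $j$ runs up to $\lfloor n/d^2\rfloor$. The key elementary step is the nested-floor identity $\lfloor n/(d^2 j)\rfloor=\lfloor \lfloor n/d^2\rfloor/j\rfloor$, after which the inner sum $\sum_{j\le\lfloor n/d^2\rfloor}\lfloor\lfloor n/d^2\rfloor/j\rfloor$ is recognized as $D_2(\lfloor n/d^2\rfloor)$ via $D_2(m)=\sum_{j\le m}\lfloor m/j\rfloor$. Collecting the surviving terms, namely those with $d^2\le n$, yields $\mathcal{T}_2(n)=\sum_{1\le i^2\le n}\mu(i)D_2(\lfloor n/i^2\rfloor)$, which is \eqref{eqn:T2}.

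For the second equality I would return to the defining description of $\mathcal{T}_2(n)$ as the number of ordered pairs $(a,b)$ of coprime positive integers with $ab\le n$. Splitting off the diagonal, the unique coprime pair with $a=b$ is $(1,1)$, and the off-diagonal pairs occur symmetrically, so $\mathcal{T}_2(n)=1+2\,\#\{(a,b):a<b,\ \gcd(a,b)=1,\ ab\le n\}$. For fixed $a$ the admissible partners are the $b$ with $a<b\le\lfloor n/a\rfloor$ and $\gcd(a,b)=1$, and their number is $\varphi(\lfloor n/a\rfloor,a)-\varphi(a)$ by the definitions of $\varphi(\cdot,\cdot)$ and $\varphi(a)=\varphi(a,a)$. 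Separating the row $a=1$, which contributes the $n-1$ pairs $(1,b)$ with $2\le b\le n$, produces the constant $1+2(n-1)=2n-1$, while the rows $a\ge2$ are nonempty only when $a^2<n$ and contribute $2\sum_{1<i^2<n}\bigl(\varphi(\lfloor n/i\rfloor,i)-\varphi(i)\bigr)$, which is \eqref{eqn:T23}.

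The Möbius inversion and the recognition of $D_2$ are routine; I expect the main obstacle to lie in the boundary bookkeeping of the second equality. One must verify that the symmetry argument isolates precisely the single diagonal pair $(1,1)$, that the row $a=1$ supplies exactly the constant hidden in $2n-1$, and that the cutoff may be taken to be the strict inequality $i^2<n$. This last reduction is harmless: when $i^2=n$ one has $\lfloor n/i\rfloor=i$, so $\varphi(\lfloor n/i\rfloor,i)-\varphi(i)=\varphi(i,i)-\varphi(i)=0$ and the term vanishes. One should also confirm that for $2\le a$ with $a^2<n$ the count $\varphi(\lfloor n/a\rfloor,a)-\varphi(a)$ is genuinely nonnegative and enumerates the intended $b$, which follows since $a^2<n$ forces $\lfloor n/a\rfloor\ge a$.
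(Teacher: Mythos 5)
Your proposal is correct and follows essentially the same route as the paper: the first identity via the square-divisor M\"{o}bius expansion of $\mu^{2}$ (which is exactly the inclusion--exclusion the paper invokes, just carried out explicitly), and the second via the symmetric count of coprime pairs $(a,b)$ with the $a=1$ row supplying $2n-1$ and the constraint $a^{2}<n$ bounding the remaining rows. Your explicit verification of the boundary case $i^{2}=n$ and of the nested-floor identity is a welcome tightening of details the paper leaves implicit, but it is not a different argument.
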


		\begin{proof}
			We have that $\delta_{2}(n)=1$ if $n$ is square-free, otherwise $0$.
				\begin{align*}
					\mathcal{T}_{2}(n)
						& = \sum_{i=1}^{n}\left\lfloor{\dfrac{n}{i}}\right\rfloor \delta_{2}(i)\\
						& = \sum_{i=1}^{n}\left\lfloor{\dfrac{n}{i}}\right\rfloor-\sum_{\substack{1\leq i\leq n\\d^{2}\mid i\\d>1}}\left\lfloor{\dfrac{n}{i}}\right\rfloor
				\end{align*}
			Now, for $1<s^{2}<n$, $s^{2}i$ is not square-free for $1\leq i\leq\lfloor{n/s^{2}}\rfloor$. Now, using principle of inclusion of exclusion, we easily see that \autoref{eqn:T2} holds.

			For proving \autoref{eqn:T23}, we consider the definition of $\mathcal{T}_{2}(n)$. By definition, $\mathcal{T}_{2}(n)$ is the number of ordered solutions $(a,b)$ to the inequality $ab\leq n$ where $\gcd(a,b)=1$. Since $(a,1)$ and $(1,b)$ are solutions for all $1\leq a,b\leq n$, there are $2n-1$ such pairs (the pair $(1,1)$ is common in both). If $(a,b)$ is a solution, then so is $(b,a)$. So $a,b>1$ and we can assume without loss of generality that $a<b$ because $(a,a)$ is not a solution since $\gcd(a,b)=a>1$. We have $1<a<b\leq\lfloor{n/a}\rfloor$ and that $\gcd(a,b)=1$. For a fixed $a$, the number of such $b$ is
				\begin{align*}
					2(\varphi(\lfloor{n/a}\rfloor)-\varphi(a))
				\end{align*}
			since $(b,a)$ is a different solution that $(a,b)$. Also, $a^{2}<ab\leq n$ so $1<a^{2}<n$. Thus, the total number of solutions is
				\begin{align*}
					2n-1+2\sum_{1<a^{2}<n}\left(\varphi\left(\left\lfloor\dfrac{n}{a}\right\rfloor,a\right)-\varphi(a)\right)
				\end{align*}
		\end{proof}

		\begin{remark}
			It should be noted that \autoref{lem:E}, \autoref{lem:F}, \autoref{thm:C}, \autoref{thm:A} and \autoref{thm:T2} should apply in general to a certain class of functions $F$ of the form
				\begin{align*}
					F(x)
						& = \sum_{n\leq x}a(n)f(n)
				\end{align*}
			where $a$ has the same property as
				\begin{align*}
					\left\lfloor{\dfrac{x}{mn}}\right\rfloor
						& = \left\lfloor{\dfrac{\left\lfloor{x/m}\right\rfloor}{n}}\right\rfloor
				\end{align*}
			and $f$ is a multiplicative function such that $f(dmn)=f(mn)$ if $d\mid n$ or $d\mid m$.
		\end{remark}
	\printbibliography
\end{document}